\newtheorem{thm}{Theorem}
\newtheorem{cor}[thm]{Corollary}
\newtheorem{lem}[thm]{Lemma}
\newtheorem{propo}[thm]{Proposition}
\theoremstyle{definition}
\theoremstyle{remark}
\begin{document}

\title{Fourier series of Jacobi-Sobolev polynomials}

\author[\'O. Ciaurri and J. M\'{\i}nguez]{\'Oscar Ciaurri and Judit M\'{\i}nguez}

\address[\'O. Ciaurri]{Departamento de Matem\'aticas y Computaci\'on\\
	Universidad de La Rioja\\
	26006 Logro\~no, Spain}
\email{oscar.ciaurri@unirioja.es}

\address[J. M\'{\i}nguez]{Departamento de Matem\'aticas y Computaci\'on\\
	Universidad de La Rioja\\
	26006 Logro\~no, Spain}
\email{judit.minguez@unirioja.es}

\keywords{Sobolev-type inner product, Sobolev polynomials, Jacobi polynomials, partial sum operator}
\subjclass[2010]{Primary: 42A20.  Secondary: 33C47}
\thanks{The authors were supported by grant MTM2015-65888-C04-4-P from Spanish Government.}

\begin{abstract}
Let $\{q_n^{(\alpha,\beta,m)}(x)\}_{n\ge 0}$ be the orthonormal polynomials respect to the Sobolev-type inner product
\begin{equation*}
\langle f,g\rangle_{\alpha,\beta,m}=\sum_{k=0}^m \int_{-1}^{1}f^{(k)}(x)g^{(k)}(x)\, dw_{\alpha+k,\beta+k}(x), \quad \alpha,\beta>-1, \quad m\ge 1,
\end{equation*}
where $dw_{a,b}(x)=(1-x)^{a}(1+x)^b\, dx$. We obtain necessary and sufficient conditions for the uniform boundedness of the partial sum operators related to this sequence of polynomials in the Sobolev space $W_{\alpha,\beta}^{p,m}$. As a consequence we deduce the convergence of such partial sums in the norm of $W_{\alpha,\beta}^{p,m}$.
\end{abstract}

\maketitle

\section{Introduction}
The study of orthogonal polynomials with respect to the Sobolev-type inner product
\begin{equation}
\label{pro-interno-general}
\langle f,g\rangle=\sum_{k=0}^m\int_{\mathbb{R}}f^{(k)}g^{(k)}\,d\mu_k,
\end{equation}
has attracted the interest of many researchers in the last years (see, for example, the survey \cite{Marce-Xu} and the references therein). In this paper we contribute to that study with the analysis of the Fourier series in terms of orthonormal polynomials associated with a particular Sobolev-type inner product. Specifically, for each $m\in \mathbb{N}\setminus\{0\}$, we consider the inner product
\begin{equation}
\label{prod-interno}
\langle f,g\rangle_{\alpha,\beta,m}=\sum_{k=0}^m \int_{-1}^{1}f^{(k)}(x)g^{(k)}(x)\, d\mu_{\alpha+k,\beta+k}(x), \qquad \alpha,\beta>-1,
\end{equation}
where $d\mu_{a,b}(x)=(1-x)^{a}(1+x)^b\, dx$. We exclude the case $m=0$ of our analysis because it corresponds with the classical inner product related to Jacobi polynomials.

By using the Rodrigues formula, the Jacobi polynomials $\{P^{(\alpha,\beta)}_n(x)\}_{n\ge 0}$ are defined as
\[
P_n^{(\alpha,\beta)}(x)=\frac{(-1)^n}{2^n \, n!}(1-x)^{-\alpha}(1+x)^{-\beta}\frac{d^n}{dx^n}\left\{(1-x)^{\alpha+n}(1+x)^{\beta+n}\right\}.
\]
They are orthogonal in the interval $[-1,1]$ with the measure $d\mu_{\alpha,\beta}$ and then the sequence $\{p_n^{(\alpha,\beta)}(x)\}_{n\ge 0}$, given by $p_n^{(\alpha,\beta)}(x)=w_n^{(\alpha,\beta)}P_n^{(\alpha,\beta)}(x)$ and
\[
w_n^{(\alpha,\beta)}=
\frac{1}{\|P_n^{(\alpha,\beta)}\|_{L^2([-1,1],dw_{\alpha,\beta})}}
=\sqrt{\frac{(2n+\alpha+\beta+1)\, n!\,\Gamma(n+\alpha+\beta+1)}
{2^{\alpha+\beta+1}\Gamma(n+\alpha+1)\,\Gamma(n+\beta+1)}},
\]
is orthonormal  and complete in $L^2([-1,1],dw_{\alpha,\beta})$. Moreover, the Jacobi polynomials are eigenfunctions of the second order differential operator
\[
L_{\alpha,\beta}f(x)=(1-x^2)f''(x)+((\beta+1)(1-x)-(\alpha+1)(1+x))f'(x).
\]
In fact,
\[
L_{\alpha,\beta}p_n^{(\alpha,\beta)}(x)=-\lambda_n^{(\alpha,\beta)}p_n^{(\alpha,\beta)}(x),
\]
with
\begin{equation}
\label{eq:eigenvalue}
\lambda_n^{(\alpha,\beta)}=n(n+\alpha+\beta+1).
\end{equation}

The identity (see \cite[p. 63, eq.~(4.21.7)]{Szego})
\[
\frac{d}{dx}P_n^{(\alpha,\beta)}(x)=\frac{n+\alpha+\beta+1}{2}P_{n-1}^{(\alpha+1,\beta+1)}(x),
\]
taking into account that $w_n^{(\alpha,\beta)}=2\sqrt{\frac{n}{n+\alpha+\beta+1}}w_{n-1}^{(\alpha+1,\beta+1)}$,
implies
\[
\frac{d}{dx}p_n^{(\alpha,\beta)}(x)=\sqrt{\lambda_{n}^{(\alpha,\beta)}}p_{n-1}^{(\alpha+1,\beta+1)}(x)
\]
and, more generally,
\begin{equation}
\label{eq:derivative}
\frac{d^k}{dx^k}p_n^{(\alpha,\beta)}(x)=\sqrt{r_{n,k}^{(\alpha,\beta)}}p_{n-k}^{(\alpha+k,\beta+k)}(x)
\end{equation}
where
\[
r_{n,k}^{(\alpha,\beta)}=\prod_{j=0}^{k-1}\lambda_{n-j}^{(\alpha+j,\beta+j)}, \qquad k\ge 1,
\]
and $r_{n,0}=1$. In this way, the polynomials
\[
q_{n}^{(\alpha,\beta,m)}(x)=\frac{p_n^{(\alpha,\beta)}(x)}{\sqrt{s^{(\alpha,\beta)}_{n,m}}},
\]
with
\[
s_{n,m}^{(\alpha,\beta)}=\sum_{k=0}^m r_{n,k}^{(\alpha,\beta)},
\]
are orthonormal respect the Sobolev-type inner product $\langle \cdot, \cdot \rangle_{\alpha,\beta,m}$; i. e., they satisfy
\[
\langle q_{n}^{(\alpha,\beta,m)},q_{j}^{(\alpha,\beta,m)}\rangle_{\alpha,\beta,m}=\delta_{n,j}.
\]

Given $1\le p<\infty$, we will write $L^p_{\alpha,\beta}$ to denote $L^p([-1,1],d\mu_{\alpha,\beta})$, the space of all measurable functions on $[-1,1]$ for wich
\[
\|f\|_{L^p_{\alpha,\beta}}:=\left(\int_{-1}^1|f(x)|^p\, d\mu_{\alpha,\beta}(x)\right)^{1/p}<\infty.
\]
For $p=\infty$, we consider the standard definition in terms of essential supremum.
We define the space $W^{p,m}_{\alpha,\beta}$, for $1\le p<\infty$, as the space of measurable functions $f$ defined on $[-1,1]$ such that there exist $f',f'',\dots,f^{(m)}$ almost everywhere and
\[
\|f\|_{W^{p,m}_{\alpha,\beta}}:=\left(\sum_{k=0}^m \|f^{(k)}\|_{L^p_{\alpha+k,\beta+k}}\right)^{1/p}<\infty.
\]

We denote by $S_n^{(\alpha,\beta,m)}f$ the $n$-th partial sum operator as
\[
S_n^{(\alpha,\beta,m)}f=\sum_{j=0}^n c_j^{(\alpha,\beta,m)}(f)q_j^{(\alpha,\beta,m)}(x),
\]
where
\[
c_j^{(\alpha,\beta,m)}(f)=\langle f,q_j^{(\alpha,\beta,m)}\rangle_{\alpha,\beta,m}
\]
are the Fourier-Jacobi-Sobolev coefficients. Our main result characterizes the uniform boundedness of the operators $S_n^{(\alpha,\beta,m)}$ in the spaces $W_{\alpha,\beta}^{p,m}$. In fact, we will prove the following theorem.

\begin{thm}
\label{thm:main}
Let $f\in W^{p,m}_{\alpha,\beta}$, with $\alpha\ge \beta>-1$, $m\in \mathbb{N}\setminus\{0\}$ and $1<p<\infty$. Then
\begin{equation}
\label{eq:acot-main}
\|S_n^{(\alpha,\beta,m)}f\|_{W^{p,m}_{\alpha,\beta}}\le C\|f\|_{W^{p,m}_{\alpha,\beta}}
\end{equation}
with a constant $C$ independent of $n$ and $f$, if and only if
\begin{equation}
\label{eq:alpha-cond}
	\frac{4(\alpha+m+1)}{2(\alpha+m)+3}<p<\frac{4(\alpha+m+1)}{2(\alpha+m)+1}.
\end{equation}
\end{thm}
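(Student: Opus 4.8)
The plan is to reduce the Sobolev problem to the classical mean-convergence theory of ordinary Jacobi expansions. By a density argument it suffices to prove \eqref{eq:acot-main} for polynomials $f$. The key observation is that $S_n^{(\alpha,\beta,m)}$ acts on polynomials exactly like the classical Jacobi partial sum: using \eqref{eq:derivative} and the orthonormality of $\{p_l^{(\alpha+k,\beta+k)}\}_l$ in $L^2_{\alpha+k,\beta+k}$, a direct computation gives $\langle p_i^{(\alpha,\beta)},q_j^{(\alpha,\beta,m)}\rangle_{\alpha,\beta,m}=\delta_{ij}\sqrt{s_{j,m}^{(\alpha,\beta)}}$, hence $S_n^{(\alpha,\beta,m)}p_i^{(\alpha,\beta)}=p_i^{(\alpha,\beta)}$ for $i\le n$ and $=0$ for $i>n$. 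By linearity $S_n^{(\alpha,\beta,m)}f=\sum_{j=0}^n a_j^{(\alpha,\beta)}(f)\,p_j^{(\alpha,\beta)}=:S_n^{(\alpha,\beta)}f$ on polynomials, where $a_j^{(\alpha,\beta)}(f)=\int_{-1}^1 f\,p_j^{(\alpha,\beta)}\,d\mu_{\alpha,\beta}$. Differentiating $i$ times with \eqref{eq:derivative} and reindexing then yields the decisive identity
\[
\big(S_n^{(\alpha,\beta,m)}f\big)^{(i)}=S_{n-i}^{(\alpha+i,\beta+i)}\!\big(f^{(i)}\big),\qquad i=0,1,\dots,m,
\]
where $S_N^{(a,b)}$ denotes the classical Jacobi partial sum of order $N$ for the measure $d\mu_{a,b}$ (with $S_N^{(a,b)}=0$ for $N<0$). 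Consequently $\|S_n^{(\alpha,\beta,m)}f\|_{W^{p,m}_{\alpha,\beta}}$ is controlled by $\sum_{i=0}^m\|S_{n-i}^{(\alpha+i,\beta+i)}(f^{(i)})\|_{L^p_{\alpha+i,\beta+i}}$, and conversely each summand is controlled by $\|S_n^{(\alpha,\beta,m)}f\|_{W^{p,m}_{\alpha,\beta}}$; so \eqref{eq:acot-main} is equivalent to the uniform boundedness of each $S_N^{(\alpha+i,\beta+i)}$ on $L^p_{\alpha+i,\beta+i}$.

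For the sufficiency, assume \eqref{eq:alpha-cond}. Since $\alpha\ge\beta>-1$ and $m\ge1$ we have $\alpha+i\ge\beta+i$ and, for $i\ge1$, $\alpha+i\ge\alpha+1>0\ge-1/2$. By the classical theorem of Pollard and Muckenhoupt (and Badkov), $S_N^{(a,b)}$ is uniformly bounded on $L^p_{a,b}$ precisely when $p$ lies in the open interval determined by $\max(a,b)$, namely $\big(\tfrac{4(a+1)}{2a+3},\tfrac{4(a+1)}{2a+1}\big)$ when that parameter is $\ge-1/2$. Because $t\mapsto\tfrac{4(t+1)}{2t+3}$ is increasing and $t\mapsto\tfrac{4(t+1)}{2t+1}$ is decreasing, these intervals are nested and shrink as the parameter grows; hence the interval attached to $\alpha+m$ --- which is exactly \eqref{eq:alpha-cond} --- is the smallest one occurring, and so \eqref{eq:alpha-cond} places $p$ inside the boundedness range of every $S_N^{(\alpha+i,\beta+i)}$, $i=0,\dots,m$ (for $i=0$ with $-1<\alpha<-1/2$ one invokes instead the appropriate endpoint form of the classical result; the $\alpha+m$ constraint remains the binding one). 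Summing the resulting estimates over $i$ gives \eqref{eq:acot-main}.

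For the necessity, suppose \eqref{eq:acot-main} holds uniformly. Given a polynomial $g$, let $f_g:=\sum_l a_l^{(\alpha+m,\beta+m)}(g)\,(r_{l+m,m}^{(\alpha,\beta)})^{-1/2}\,p_{l+m}^{(\alpha,\beta)}$, a natural $m$-fold antiderivative of $g$; then $f_g^{(m)}=g$ and $f_g^{(k)}=\sum_l a_l^{(\alpha+m,\beta+m)}(g)\,(r_{l+m,k}^{(\alpha,\beta)}/r_{l+m,m}^{(\alpha,\beta)})^{1/2}\,p_{l+m-k}^{(\alpha+k,\beta+k)}$, which is obtained from $g$ by a change of Jacobi parameters from $(\alpha+m,\beta+m)$ down to $(\alpha+k,\beta+k)$, a shift of the index by $m-k$, and multiplication by the bounded-variation, nonpositive-order multiplier $(r_{l+m,k}^{(\alpha,\beta)}/r_{l+m,m}^{(\alpha,\beta)})^{1/2}\asymp l^{k-m}$. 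By Muckenhoupt's transplantation theorem together with the $L^p_{\alpha+k,\beta+k}$-boundedness of the pertinent shift and multiplier operators --- valid for $p$ throughout the interior of \eqref{eq:alpha-cond} and somewhat beyond, since lowering the parameter and smoothing only enlarge the admissible range --- one obtains $\|f_g\|_{W^{p,m}_{\alpha,\beta}}\le C\|g\|_{L^p_{\alpha+m,\beta+m}}$. Applying \eqref{eq:acot-main} to $f=f_g$ (with partial-sum index $N+m$) and reading off the $i=m$ component of the identity above yields $\|S_N^{(\alpha+m,\beta+m)}g\|_{L^p_{\alpha+m,\beta+m}}\le C\|g\|_{L^p_{\alpha+m,\beta+m}}$ uniformly in $N$; the sharpness half of the Pollard--Muckenhoupt theorem then forces \eqref{eq:alpha-cond}.

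The heart of the matter is the structural identity $(S_n^{(\alpha,\beta,m)}f)^{(i)}=S_{n-i}^{(\alpha+i,\beta+i)}(f^{(i)})$, which collapses the Sobolev problem into a finite family of ordinary Jacobi partial-sum problems; after that, the only genuine work in the sufficiency is bookkeeping with the nested Pollard intervals (plus the $-1<\alpha<-1/2$ edge case at $i=0$), while in the necessity the delicate point is the antiderivative estimate $\|f_g\|_{W^{p,m}_{\alpha,\beta}}\lesssim\|g\|_{L^p_{\alpha+m,\beta+m}}$, which must be pushed up to the endpoints of the interval \eqref{eq:alpha-cond} and relies on the transplantation and Jacobi multiplier/shift bounds holding there. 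The classical $L^p$ theory of Jacobi partial sums is used as a black box throughout.
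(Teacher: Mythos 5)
Your reduction of the sufficiency half is a genuinely different and considerably shorter route than the paper's. The identity $\langle p_i^{(\alpha,\beta)},q_j^{(\alpha,\beta,m)}\rangle_{\alpha,\beta,m}=\delta_{ij}\sqrt{s_{j,m}^{(\alpha,\beta)}}$ is correct, hence so is the collapse of $S_n^{(\alpha,\beta,m)}$ to the classical Jacobi partial sum on polynomials and the identity $(S_n^{(\alpha,\beta,m)}f)^{(i)}=S_{n-i}^{(\alpha+i,\beta+i)}(f^{(i)})$; combined with the density of polynomials in $W^{p,m}_{\alpha,\beta}$ (which the paper itself invokes, citing \cite{rodriguez}) and the continuity of each fixed $S_n^{(\alpha,\beta,m)}$ on $W^{p,m}_{\alpha,\beta}$, this legitimately reduces \eqref{eq:acot-main} to the uniform $L^p_{\alpha+i,\beta+i}$-boundedness of the classical partial sums for $i=0,\dots,m$, and the nestedness of the Pollard intervals does make $\alpha+m$ the binding parameter. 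The paper instead expands $c_j^{(\alpha,\beta,m)}(f)$ into the pieces $b_j^{(\alpha,\beta,k)}(f^{(k)})$ and proves the stronger family of bounds \eqref{eq:partial-Lp} for every pair $(k,\ell)$ directly on $W^{p,m}_{\alpha,\beta}$, which is what costs it the kernel and Hardy-inequality analysis of Proposition \ref{propo-T}. Your route buys simplicity at the price of leaning on density and on the classical mean-convergence theorem at $i=0$ for parameters possibly in $(-1,-1/2)$, a case you flag but do not verify (the statement \eqref{eq:range-Sn} quoted in the paper covers only parameters $\ge -1/2$).

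The genuine gap is in the necessity half. Your argument derives \eqref{eq:alpha-cond} from \eqref{eq:acot-main} only through the antiderivative estimate $\|f_g\|_{W^{p,m}_{\alpha,\beta}}\le C\|g\|_{L^p_{\alpha+m,\beta+m}}$, which you assert ``for $p$ throughout the interior of \eqref{eq:alpha-cond} and somewhat beyond.'' That does not close the logic: necessity requires showing that \eqref{eq:acot-main} fails for \emph{every} $p$ outside \eqref{eq:alpha-cond}, so the antiderivative estimate would have to be established for every such $p$ (a priori all of $1<p<\infty$), and it is precisely at exponents far from $2$ that the weighted transplantation from $(\alpha+m,\beta+m)$ down to $(\alpha+k,\beta+k)$, measured in $L^p_{\alpha+k,\beta+k}$ against $L^p_{\alpha+m,\beta+m}$, is least obvious; Muckenhoupt's transplantation theorem carries weight conditions that you never check. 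As written, your argument only shows that \eqref{eq:acot-main} fails on an unspecified neighbourhood of \eqref{eq:alpha-cond} minus \eqref{eq:alpha-cond} itself. The paper's necessity proof avoids all of this: from \eqref{eq:acot-main} it extracts $\|q_n^{(\alpha,\beta,m)}\|_{W^{p,m}_{\alpha,\beta}}\,\|q_n^{(\alpha,\beta,m)}\|_{W^{p',m}_{\alpha,\beta}}\le C$ via the duality of $W^{p,m}_{\alpha,\beta}$, identifies both factors with $\|p_{n-k}^{(\alpha+m,\beta+m)}\|_{L^p_{\alpha+m,\beta+m}}$-type quantities, and concludes from the norm asymptotics \eqref{eq:Jacobi-Lp}. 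You should replace your necessity argument by this dual-pairing test, or else actually prove the antiderivative estimate for all $1<p<\infty$.
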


The restriction $\alpha\ge\beta$ is imposed to simplify the proof of the result but we do not lose generality with it. In fact, when $\beta\ge \alpha$ the uniform boundedness \eqref{eq:acot-main} holds if and only if
\begin{equation*}
	\frac{4(\beta+m+1)}{2(\beta+m)+3}<p<\frac{4(\beta+m+1)}{2(\beta+m)+1},
\end{equation*}
and, in general, for $\alpha,\beta>-1$ \eqref{eq:acot-main} is verified if and only if
\[
\max\left\{\frac{4(\alpha+m+1)}{2(\alpha+m)+3}, \frac{4(\beta+m+1)}{2(\beta+m)+3}\right\}<p<
\min\left\{\frac{4(\alpha+m+1)}{2(\alpha+m)+1}, \frac{4(\beta+m+1)}{2(\beta+m)+1}\right\}.
\]

The analysis of the Fourier series of Jacobi polynomials has a long history. Pollard in \cite{PollardII} and \cite{PollardIII} studied the uniform boundedness of the partial sums for the Fourier series of Gegenbauer and Jacobi polynomials, respectively.  A general result including weights for Jacobi expansions can be seen in \cite{muckenhoupt1}. In \cite{GPRV1}, by applying the boundedness with weights of the Hilbert transform, the authors did a complete study of the boundedness of the partial sum operators related to generalized Jacobi weights. The same authors studied the  generalized Jacobi weights with mass points on the interval $[-1,1]$ (see \cite{GPRV2}).

In \cite{CM}, the authors gave a complete characterization of the uniform boundedness of the partial sum operators for the Fourier series related to orthonormal polynomials with respect to  \eqref{pro-interno-general} where $d\mu_0=d\mu_{\alpha}+M(\delta_1+\delta_{-1})$, with $d\mu_{\alpha}$ the probability measure corresponding to the Gegenbauer polynomials, $d\mu_1=N(\delta_1+\delta_{-1})$, and $d\mu_k=0$, $k\ge 2$. That was the first result of this type in the literature. In fact, as it is observed in \cite{Marce-Xu}, the main obstacle to analyze this kind of problems is the lack of a Christoffel–Darboux formula for Sobolev orthogonal polynomials. So it is necessary to look for alternative ways to deal with the problem.

In \cite{MQU}, the authors considered the Fourier series for polynomials associated to the Sobolev-type inner product
\begin{equation}
\label{prod-interno-gegen}
\langle f,g\rangle_S=\int_{-1}^{1}f(x)g(x)w_{\alpha}(x)\,dx+\int_{-1}^{1}f'(x)g'(x)w_{\alpha+1}(x)\,dx,
\end{equation}
where $w_{\alpha}(x)=(1-x^2)^{\alpha}$, $x\in [-1,1]$ and $\alpha>-1$. They analyzed the uniform boundedness of the partial sum operators for the orthogonal polynomials with respect \eqref{prod-interno-gegen} using the Pollard decomposition but, unfortunately, the given results are not completely satisfactory. Theorem \ref{thm:main} gives, as a particular case, necessary and sufficient conditions for this case.

Due to the denseness of the polynomials in the spaces $W_{\alpha,\beta}^{p,m}$ \cite{rodriguez}, applying the uniform boundedness theorem in the complete space $W_{\alpha,\beta}^{p,m}$, it is verified that \eqref{eq:acot-main} is equivalent to the convergence of the partial sums $S_n^{(\alpha,\beta,m)}$ in the spaces $W_{\alpha,\beta}^{p,m}$. Then, we have the following result.

\begin{cor} Let $f\in W_{\alpha,\beta}^{p,m}$ with $\alpha\ge \beta>-1$, $m\in \mathbb{N}\setminus\{0\}$, and $1<p<\infty$. Then
\begin{equation*}
	\lim_{n\to\infty}\|S_n^{(\alpha,\beta,m)}f-f\|_{W_{\alpha,\beta}^{p,m}}=0
\end{equation*}
if and only if 
\begin{equation*}
	\frac{4(\alpha+m+1)}{2(\alpha+m)+3}<p<\frac{4(\alpha+m+1)}{2(\alpha+m)+1}.
\end{equation*}
\end{cor}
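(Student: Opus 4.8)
The plan is to derive the corollary from Theorem~\ref{thm:main} by the Banach--Steinhaus theorem, exactly as announced in the paragraph preceding its statement; note that the range of $p$ appearing here is literally the condition \eqref{eq:alpha-cond}, so the equivalence in the corollary is the equivalence ``norm convergence $\iff$ uniform boundedness'' combined with Theorem~\ref{thm:main}. The ingredients I would use are: (i) $W_{\alpha,\beta}^{p,m}$ is a Banach space; (ii) the polynomials are dense in it, which is the content of \cite{rodriguez}; and (iii) for each fixed $n$ the operator $S_n^{(\alpha,\beta,m)}$ is bounded on $W_{\alpha,\beta}^{p,m}$. For (iii) one observes that $q_j^{(\alpha,\beta,m)}$ and all its derivatives are polynomials, hence lie in every $L^{p'}_{\alpha+k,\beta+k}$ since $\alpha+k,\beta+k>-1$; by H\"older's inequality each coefficient functional $f\mapsto c_j^{(\alpha,\beta,m)}(f)$ is continuous on $W_{\alpha,\beta}^{p,m}$, and $S_n^{(\alpha,\beta,m)}$ is a finite linear combination of these.

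For the sufficiency, assume \eqref{eq:alpha-cond}. I would first use that $S_n^{(\alpha,\beta,m)}P=P$ for every polynomial $P$ with $\deg P\le n$, which is immediate from the orthonormality of $\{q_j^{(\alpha,\beta,m)}\}$ with respect to $\langle\cdot,\cdot\rangle_{\alpha,\beta,m}$. Given $f\in W_{\alpha,\beta}^{p,m}$ and $\varepsilon>0$, choose by density a polynomial $P$ with $\|f-P\|_{W_{\alpha,\beta}^{p,m}}<\varepsilon$; then for $n\ge\deg P$,
\[
\|S_n^{(\alpha,\beta,m)}f-f\|_{W_{\alpha,\beta}^{p,m}}\le\|S_n^{(\alpha,\beta,m)}(f-P)\|_{W_{\alpha,\beta}^{p,m}}+\|P-f\|_{W_{\alpha,\beta}^{p,m}}\le(C+1)\varepsilon,
\]
where $C$ is the uniform bound furnished by Theorem~\ref{thm:main}. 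Letting $n\to\infty$ and then $\varepsilon\to0$ gives $\lim_n\|S_n^{(\alpha,\beta,m)}f-f\|_{W_{\alpha,\beta}^{p,m}}=0$.

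For the necessity, suppose $S_n^{(\alpha,\beta,m)}f\to f$ in $W_{\alpha,\beta}^{p,m}$ for every $f$ in this space. A convergent sequence is bounded, hence $\sup_n\|S_n^{(\alpha,\beta,m)}f\|_{W_{\alpha,\beta}^{p,m}}<\infty$ for each fixed $f$. Since $W_{\alpha,\beta}^{p,m}$ is complete and each $S_n^{(\alpha,\beta,m)}$ is a bounded operator on it, the uniform boundedness principle yields $\sup_n\|S_n^{(\alpha,\beta,m)}\|<\infty$, i.e., \eqref{eq:acot-main} holds with a constant independent of $n$ and $f$. By the ``only if'' implication of Theorem~\ref{thm:main}, the condition \eqref{eq:alpha-cond} must then be satisfied.

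I do not expect a genuine obstacle in this argument: it is the standard equivalence between norm convergence of a sequence of uniformly bounded operators and their agreement on a dense subset, and all the analytic difficulty has already been absorbed into Theorem~\ref{thm:main}. The only points needing a line of verification are the completeness of $W_{\alpha,\beta}^{p,m}$ and the individual boundedness of $S_n^{(\alpha,\beta,m)}$, both of which are routine.
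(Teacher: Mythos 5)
Your argument is correct and is exactly the route the paper takes: the paper's proof consists of the one-line observation that, by the density of polynomials \cite{rodriguez} and the uniform boundedness principle in the complete space $W_{\alpha,\beta}^{p,m}$, the uniform bound \eqref{eq:acot-main} is equivalent to convergence of the partial sums, which combined with Theorem \ref{thm:main} gives the corollary. Your write-up simply makes explicit the standard details (individual boundedness of each $S_n^{(\alpha,\beta,m)}$, the reproducing property $S_n^{(\alpha,\beta,m)}P=P$ for $\deg P\le n$, and the three-term estimate), all of which are sound.
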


The next section contains the proof of Theorem \ref{thm:main} and it is divided into two subsections, one for the sufficient conditions and the other for the necessary ones. The last section is devoted to the proof of a technical result involved in the proof of Theorem \ref{thm:main}.
\section{Proof of Theorem \ref{thm:main}}
\subsection{Sufficient conditions}

For $n$ big enough, it is clear that
\[
S_n^{(\alpha,\beta,m)}f(x)=\sum_{k=0}^{m}\mathcal{S}_n^{(\alpha,\beta,k)}f(x),
\]
where
\[
\mathcal{S}_n^{(\alpha,\beta,k)}f(x)=\sum_{j=k}^n \sqrt{\frac{r^{(\alpha,\beta)}_{j,k}}{s^{(\alpha,\beta)}_{j,m}}}b_j^{(\alpha,\beta,k)}(f^{(k)})q_j^{(\alpha,\beta,m)}(x)
\]
and
\[
b_j^{(\alpha,\beta,k)}(f^{(k)})=\int_{-1}^{1}f^{(k)}(y)p_{j-k}^{(\alpha+k,\beta+k)}(y)\, d\mu_{\alpha+k,\beta+k}(y).
\]
To obtain the uniform boundedness \eqref{eq:acot-main}, it is enough to prove that
\begin{equation}
\label{eq:partial-Lp}
\left\|\left(\mathcal{S}_n^{(\alpha,\beta,k)}f\right)^{(\ell)} \right\|_{L^p_{\alpha+\ell,\beta+\ell}}\le C \|f^{(k)}\|_{L^{p}_{\alpha+k,\beta+k}}, \qquad 0\le k, \ell \le m,
\end{equation}
under the conditions \eqref{eq:alpha-cond}. First, note that
\[
\left(\mathcal{S}_n^{(\alpha,\beta,k)}f\right)^{(\ell)}=\sum_{j=\max\{k,\ell\}}^n \frac{\sqrt{r^{(\alpha,\beta)}_{j,k}r^{(\alpha,\beta)}_{j,\ell}}}{s^{(\alpha,\beta)}_{j,m}}
b_j^{(\alpha,\beta,k)}(f^{(k)})p_{j-\ell}^{(\alpha+\ell,\beta+\ell)}.
\]
To obtain \eqref{eq:partial-Lp}, we distinguish three cases $2m-2\ge k+\ell$, $2m-1=k+\ell$, and $2m=k+\ell$.

\textbf{Case $2m-2\ge k+\ell$.} For $\alpha\ge \beta> -1$, it is well known (see \cite{LK}) the equivalence
\begin{equation}
\label{eq:Jacobi-Lp}
\|p_n^{(\alpha,\beta)}\|_{L^p_{\alpha,\beta}}\simeq \begin{cases}
1, & 1\le p <\frac{4(\alpha+1)}{2\alpha+1},\\[3pt]
(\log n)^{1/p}, & p=\frac{4(\alpha+1)}{2\alpha+1},\\[3pt]
n^{(2\alpha+1-4(\alpha+1)p)/2}, & p>\frac{4(\alpha+1)}{2\alpha+1}.
\end{cases}
\end{equation}
Then, using that
\begin{equation}
\label{eq:asym}
\frac{\sqrt{r^{(\alpha,\beta)}_{j,k}r^{(\alpha,\beta)}_{j,\ell}}}{s^{(\alpha,\beta)}_{j,m}}=\frac{1}{(j+1)^{2m-k-l}}
\left(A+\frac{B}{j+1}+O\left(\frac{1}{(j+1)^2}\right)\right),
\end{equation}
for some constants $A$ and $B$, we have
\[
\left\|\left(\mathcal{S}_n^{(\alpha,\beta,k)}f\right)^{(\ell)}\right\|_{L^p_{\alpha+\ell,\beta+\ell}}\le
\sum_{j=\max\{k,\ell\}}^n \frac{|b_j^{(\alpha,\beta,k)}(f^{(k)})|}{(j+1)^{2m-k-\ell}}
\|p_{j-\ell}^{(\alpha+\ell,\beta+\ell)}\|_{L^p_{\alpha+\ell,\beta+\ell}}.
\]
By H\"older inequality,
\[
|b_j^{(\alpha,\beta,k)}(f^{(k)})|\le \|p_{j-k}^{(\alpha+k,\beta+k)}\|_{L^{p'}_{\alpha+k,\beta+k}}\|f^{(k)}\|_{L^p_{\alpha+k,\beta+k}},
\]
where $p'$ is the conjugate value of $p$ and it satisfies $1/p+1/p'=1$, 
and
\begin{multline*}
\left\|\left(\mathcal{S}_n^{(\alpha,\beta,k)}f\right)^{(\ell)}\right\|_{L^p_{\alpha+\ell,\beta+\ell}}\\
\begin{aligned}
&\le
\|f^{(k)}\|_{L^p_{\alpha+k,\beta+k}}\sum_{j=\max\{k,\ell\}}^n \frac{\|p_{j-k}^{(\alpha+k,\beta+k)}\|_{L^{p'}_{\alpha+k,\beta+k}}
\|p_{j-\ell}^{(\alpha+\ell,\beta+\ell)}\|_{L^p_{\alpha+\ell,\beta+\ell}}}{(j+1)^{2m-k-\ell}}\\&\le
C\|f^{(k)}\|_{L^p_{\alpha+k,\beta+k}},
\end{aligned}
\end{multline*}
where in the last step we have used the inequality
\begin{multline*}
\|p_{j-k}^{(\alpha+k,\beta+k)}\|_{L^{p'}_{\alpha+k,\beta+k}}
\|p_{j-\ell}^{(\alpha+\ell,\beta+\ell)}\|_{L^p_{\alpha+\ell,\beta+\ell}}\\\le \|p_{j-k}^{(\alpha+m,\beta+m)}\|_{L^{p'}_{\alpha+m,\beta+m}}
\|p_{j-\ell}^{(\alpha+m,\beta+m)}\|_{L^p_{\alpha+m,\beta+m}},
\end{multline*}
\eqref{eq:Jacobi-Lp}, the restriction \eqref{eq:alpha-cond}, and the condition $2m-2\ge k+\ell$.

\textbf{Case $2m-1=k+\ell$.}
In this case we have to prove \eqref{eq:partial-Lp} for the pairs $(k,\ell)=(m,m-1)$ and $(k,\ell)=(m-1,m)$. We will focus in the last pair because the estimate for the other pair can be obtained by using similar arguments and duality. By \eqref{eq:asym}, we have
\[
\left(\mathcal{S}_n^{\alpha,\beta,m-1}f\right)^{(m)}(x)=A\mathcal{T}_1f^{(m-1)}(x)+\mathcal{T}_2f^{(m-1)}(x),
\]
for some constant $A$,
where
\[
\mathcal{T}_1f^{(m-1)}(x)=\sum_{j=1}^{n-m+1}\frac{b_{j+m-1}^{(\alpha,\beta,m-1)}(f^{(m-1)})}{j+m}p_{j-1}^{(\alpha+m,\beta+m)}(x)
\]
and
\[
|\mathcal{T}_2f^{(m-1)}(x)|\le C \sum_{j=m}^n\frac{|b_j^{(\alpha,\beta,m-1)}(f^{(m-1)})|}{(j+1)^2}|p_{j-m}^{(\alpha+m,\beta+m)}(x)|.
\]
The estimate
\[
\|\mathcal{T}_2f^{(m-1)}\|_{L^p_{\alpha+m,\beta+m}}\le C \|f^{(m-1)}\|_{L^{p}_{\alpha+m-1,\beta+m-1}},
\]
under the restrictions \eqref{eq:alpha-cond}, can be obtained by using H\"older inequality as in the previous case.
To prove the boundedness of $\mathcal{T}_1$ we write it as the composition of two operators. For $\alpha,\beta>0$, we define
\[
T_{\alpha,\beta}g(x)=\sum_{j=1}^{\infty}\frac{e_{j}^{(\alpha-1,\beta-1)}(g)}{j+m}p_{j-1}^{(\alpha,\beta)}(x)
\]
with
\[
e_j^{(\alpha,\beta)}(g)=\int_{-1}^1 g(y)p_j^{(\alpha,\beta)}(y)\, d\mu_{\alpha,\beta}.
\]
Moreover, for $\alpha,\beta>-1$, we consider the partial sum operator operator for the Jacobi expansions
\[
\mathbb{S}_n^{(\alpha,\beta)}h(x)=\sum_{j=0}^{n}e_j^{(\alpha,\beta)}(h)p_j^{(\alpha,\beta)}(x).
\]
It is known \cite{PollardIII} that, for $\alpha,\beta\ge -1/2$,
\[
\|\mathbb{S}_n^{(\alpha,\beta)}h\|_{L^p_{\alpha,\beta}}\le C \|h\|_{L^p_{\alpha,\beta}},
\]
with a constant $C$ independent of $n$ and $f$, if and only if
\begin{equation}
\label{eq:range-Sn}
\max\left\{\frac{4(\alpha+1)}{2\alpha+3},\frac{4(\beta+1)}{2\beta+3}\right\}<p<
\min\left\{\frac{4(\alpha+1)}{2\alpha+1},\frac{4(\beta+1)}{2\beta+1}\right\}.
\end{equation}
About the boundedness properties of the operator $T_{\alpha,\beta}$ we have the following result.
\begin{propo}
\label{propo-T}
  For $\alpha,\beta>0$ and $1<p<\infty$, it is verified that
  \begin{equation}
  \label{eq:acot-T}
  \|T_{\alpha,\beta}g\|_{L^p_{\alpha,\beta}}\le C\|g\|_{L^p_{\alpha-1,\beta-1}},
  \end{equation}
  for each $g\in L^p_{\alpha-1,\beta-1}$.
\end{propo}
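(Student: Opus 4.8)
The plan is to factor $T_{\alpha,\beta}=H_{\alpha,\beta}\circ\Lambda_{\alpha,\beta}$, where $\Lambda_{\alpha,\beta}$ is a benign multiplier operator — bounded on $L^p_{\alpha-1,\beta-1}$ for every $1<p<\infty$ and with range inside the functions of vanishing $d\mu_{\alpha-1,\beta-1}$-mean — and $H_{\alpha,\beta}$ is an explicit Hardy-type operator which, \emph{restricted to that subspace}, is bounded for every such $p$; the mean-zero restriction is exactly what removes any condition on $p$. Throughout write $\rho_{a,b}(x)=(1-x)^{a}(1+x)^{b}$, so that $d\mu_{a,b}=\rho_{a,b}\,dx$.

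To obtain the factorization, first record an integration formula which is a companion of \eqref{eq:derivative}. Writing $L_{\alpha-1,\beta-1}$ in divergence form, $L_{\alpha-1,\beta-1}f=\rho_{\alpha-1,\beta-1}^{-1}(\rho_{\alpha,\beta}\,f')'$, applying it to $p_j^{(\alpha-1,\beta-1)}$, and using \eqref{eq:derivative} with $k=1$ together with \eqref{eq:eigenvalue}, one finds
\[
\frac{d}{dx}\bigl(\rho_{\alpha,\beta}(x)\,p_{j-1}^{(\alpha,\beta)}(x)\bigr)=-\sqrt{\lambda_j^{(\alpha-1,\beta-1)}}\;\rho_{\alpha-1,\beta-1}(x)\,p_j^{(\alpha-1,\beta-1)}(x),\qquad j\ge1.
\]
Since $\beta>0$ the boundary value at $x=-1$ vanishes, so integrating from $-1$ gives $\rho_{\alpha,\beta}(x)p_{j-1}^{(\alpha,\beta)}(x)=-\sqrt{\lambda_j^{(\alpha-1,\beta-1)}}\int_{-1}^{x}\rho_{\alpha-1,\beta-1}\,p_j^{(\alpha-1,\beta-1)}$. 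Multiplying the definition of $T_{\alpha,\beta}$ by $\rho_{\alpha,\beta}$ and interchanging sum and integral (harmless for polynomial $g$, and general $g$ is reached by density once \eqref{eq:acot-T} is established) yields
\[
T_{\alpha,\beta}g(x)=-\frac{1}{\rho_{\alpha,\beta}(x)}\int_{-1}^{x}\rho_{\alpha-1,\beta-1}(t)\,\Lambda_{\alpha,\beta}g(t)\,dt,\qquad \Lambda_{\alpha,\beta}g:=\sum_{j\ge1}\gamma_j\,e_j^{(\alpha-1,\beta-1)}(g)\,p_j^{(\alpha-1,\beta-1)},
\]
where $\gamma_j:=\sqrt{\lambda_j^{(\alpha-1,\beta-1)}}/(j+m)$. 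Since $\gamma_0=0$, the function $\Lambda_{\alpha,\beta}g$ has zero $d\mu_{\alpha-1,\beta-1}$-mean, so the inner integral also equals $-\int_x^1\rho_{\alpha-1,\beta-1}\,\Lambda_{\alpha,\beta}g$.

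Next, $\gamma_j=\psi(1/j)$ with $\psi(u)=\sqrt{1+(\alpha+\beta-1)u}/(1+mu)$ smooth near $u=0$, so $\{\gamma_j\}$ is bounded, tends to $1$, and has $\gamma_{j+1}-\gamma_j=O(j^{-2})$; hence it (and, uniformly, its finite truncations) meets the hypotheses of a Marcinkiewicz-type multiplier theorem for Jacobi expansions, and $\Lambda_{\alpha,\beta}$ is bounded on $L^p_{\alpha-1,\beta-1}$ for all $1<p<\infty$. For the Hardy operator $H_{\alpha,\beta}h(x)=\rho_{\alpha,\beta}(x)^{-1}\int_{-1}^{x}\rho_{\alpha-1,\beta-1}\,h$, acting on $h$ of vanishing $d\mu_{\alpha-1,\beta-1}$-mean, split $[-1,1]$ into three pieces. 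On a central part all weights are $\asymp1$ and H\"older against the constant function gives $|H_{\alpha,\beta}h(x)|\le C\|h\|_{L^p_{\alpha-1,\beta-1}}$. On a part near $-1$, H\"older on $[-1,x]$ and $\int_{-1}^{x}\rho_{\alpha-1,\beta-1}\asymp(1+x)^{\beta}$ give $|H_{\alpha,\beta}h(x)|\le C\|h\|_{L^p_{\alpha-1,\beta-1}}(1+x)^{-\beta/p}$. On a part near $1$, the mean-zero property lets us replace $\int_{-1}^{x}$ by $-\int_{x}^{1}$ and argue symmetrically, with $\int_{x}^{1}\rho_{\alpha-1,\beta-1}\asymp(1-x)^{\alpha}$, giving $|H_{\alpha,\beta}h(x)|\le C\|h\|_{L^p_{\alpha-1,\beta-1}}(1-x)^{-\alpha/p}$. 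In each region the singular factor $(1+x)^{-\beta}$, resp.\ $(1-x)^{-\alpha}$, is exactly absorbed by the weight of $d\mu_{\alpha,\beta}$, so $\int|H_{\alpha,\beta}h|^{p}\,d\mu_{\alpha,\beta}\le C\|h\|_{L^p_{\alpha-1,\beta-1}}^{p}$; composing with the bound for $\Lambda_{\alpha,\beta}$ proves \eqref{eq:acot-T}.

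The main — indeed the only non-elementary — ingredient is the $L^p_{\alpha-1,\beta-1}$-boundedness of the multiplier $\Lambda_{\alpha,\beta}$; this is where the real work lies and is presumably what the last section treats, either by appealing to known Marcinkiewicz/H\"ormander multiplier theorems for Jacobi expansions or by a direct estimate (e.g.\ of the kernel of $\Lambda_{\alpha,\beta}-\mathrm{Id}$, whose symbol decays like $1/j$). Everything else is bookkeeping; the one genuine idea is the vanishing-mean observation, which is exactly what allows $H_{\alpha,\beta}$ to be controlled near the "far" endpoint and thereby yields the full range $1<p<\infty$, in contrast with the Jacobi partial sum operator $\mathbb{S}_n^{(\alpha,\beta)}$.
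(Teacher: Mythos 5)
Your factorization is correct and yields a genuinely different proof from the paper's. The paper estimates the kernel $L(x,y)=\sum_{j\ge1}p_{j-1}^{(\alpha,\beta)}(x)p_j^{(\alpha-1,\beta-1)}(y)/(j+m)$ directly: it passes to $\theta=\arccos x$, bounds the associated kernel $\mathcal{L}(\theta,\omega)$ in three regions via the auxiliary kernels $\mathcal{L}^{(\alpha,\beta),(\alpha-1,\beta-1)}_{r,d,m}$ and Muckenhoupt's transplantation memoir, and then invokes the Andersen--Muckenhoupt weighted Hardy inequalities \eqref{eq:con-Hardy}--\eqref{eq:con-Hardy-ad} together with a logarithmic-kernel estimate. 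Your route instead exploits the exact identity $\bigl(\rho_{\alpha,\beta}\,p_{j-1}^{(\alpha,\beta)}\bigr)'=-\sqrt{\smash[b]{\lambda_j^{(\alpha-1,\beta-1)}}}\,\rho_{\alpha-1,\beta-1}\,p_j^{(\alpha-1,\beta-1)}$ (which I checked: it follows from the divergence form of $L_{\alpha-1,\beta-1}$ and \eqref{eq:derivative}, and the boundary term at $-1$ vanishes since $\beta>0$) to write $T_{\alpha,\beta}=-H_{\alpha,\beta}\circ\Lambda_{\alpha,\beta}$. Your treatment of $H_{\alpha,\beta}$ on mean-zero inputs is complete and elementary --- the pointwise H\"older bounds $(1+x)^{-\beta/p}$ and $(1-x)^{-\alpha/p}$ are exactly integrable against $d\mu_{\alpha,\beta}$, and the vanishing-mean trick is precisely what kills the otherwise fatal singularity at $x=1$. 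What your approach buys is a clean conceptual separation (smooth spectral multiplier $+$ explicit one-dimensional Hardy operator with cancellation); what it costs is that the entire analytic burden is shifted onto the multiplier $\Lambda_{\alpha,\beta}$, which is a result of comparable depth to the kernel estimates the paper imports.

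That multiplier step is the one place where your argument, as written, has a gap. You justify the boundedness of $\Lambda_{\alpha,\beta}$ on $L^p_{\alpha-1,\beta-1}$ for all $1<p<\infty$ by noting that $\gamma_j$ is bounded, tends to $1$, and has first differences $O(j^{-2})$, and you appeal to ``a Marcinkiewicz-type multiplier theorem.'' For Jacobi expansions this is not sufficient: first-order Marcinkiewicz/H\"ormander conditions (Muckenhoupt's class $M(1,1)$, used in the paper's Lemma~\ref{lem:muck}) only give a Pollard-type restricted range of $p$ once $\max\{\alpha,\beta\}$ is large, and the sharp multiplier theorems of Gasper--Trebels/Connett--Schwartz type require smoothness of order exceeding roughly $\max\{\alpha,\beta\}-1/2$ to reach the full range $1<p<\infty$. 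The conclusion you need is nevertheless true, because your specific sequence has smoothness of every order: $\gamma_j=\psi(1/j)$ with $\psi$ smooth gives $\Delta^s\gamma_j=O(j^{-s-1})$ for all $s$, so writing $\gamma_j=1+\delta_j$ and expanding $\delta_j$ in Ces\`aro means of order $\delta>\max\{\alpha,\beta\}-1/2$ (which are uniformly bounded on every $L^p_{\alpha-1,\beta-1}$) yields the full range; alternatively, $\gamma_j$ is a bounded H\"ormander--Mikhlin function of $\lambda_j^{(\alpha-1,\beta-1)}$ and spectral multiplier theorems for the Jacobi operator apply. So the gap is reparable, but you must invoke the higher-order smoothness explicitly and cite (or prove) a theorem whose hypotheses your sequence actually satisfies for the claimed range of $p$; the properties you listed do not, by themselves, imply boundedness outside a neighbourhood of $p=2$ when $\alpha$ or $\beta$ is large.
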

The proof of this proposition is highly technical and it is postponed to the last section.

Now, it is easy to check that
\[
\mathcal{T}_1f^{(m-1)}(x)=T_{\alpha+m,\beta+m}(\mathbb{S}_{n-m+1}^{\alpha+m-1,\beta+m-1}f^{(m-1)})(x).
\]
Then, by Proposition \ref{propo-T} and \eqref{eq:range-Sn}, it is clear that
\[
\|\mathcal{T}_1f^{(m-1)}\|_{L^p_{\alpha+m,\beta+m}}\le C \|f^{(m-1)}\|_{L^p_{\alpha+m-1,\beta+m-1}}
\]
when the conditions \eqref{eq:alpha-cond} hold.

\textbf{Case $2m=k+\ell$.} In this last case $k=\ell=m$ and, by \eqref{eq:asym}, we have the decomposition
\[
(\mathcal{S}^{(\alpha,\beta,m)}_nf)^{(m)}(x)=A\mathbb{S}_{n-m}^{(\alpha+m,\beta+m)}f^{(m)}(x)+B\mathcal{P}_1f^{(m)}(x)+\mathcal{P}_2f^{(m)}(x),
\]
where
\[
\mathcal{P}_1f^{(m)}(x)=\sum_{j=0}^{n-m}\frac{e_j^{(\alpha+m,\beta+m)}(f^{(m)})}{j+m+1}p_j^{(\alpha+m,\beta+m)}(x)
\]
and
\[
|\mathcal{P}_2f^{(m)}(x)|\le C \sum_{j=m}^n\frac{|b_j^{(\alpha,\beta,m)}(f^{(m)})|}{(j+1)^2}|p_{j-m}^{(\alpha+m,\beta+m)}(x)|
\]
The estimate
\[
\|\mathcal{P}_2f^{(m)}\|_{L^p_{\alpha+m,\beta+m}}\le C \|f^{(m)}\|_{L^p_{\alpha+m,\beta+m}}
\]
when the conditions \eqref{eq:alpha-cond} hold, it is obtained by applying H\"older as in the two previous cases. When we consider  \eqref{eq:range-Sn} with $\alpha+m$ and $\beta+m$ instead of $\alpha$ and $\beta$, with $\alpha\ge \beta$, we obtain \eqref{eq:alpha-cond}, so we deduce that
\[
\|\mathbb{S}_{n-m}^{(\alpha+m,\beta+m)}f^{(m)}\|_{L^p_{\alpha+m,\beta+m}}\le C \|f^{(m)}\|_{L^p_{\alpha+m,\beta+m}}.
\]

Finally to analyze the operator $\mathcal{P}_1$ we need an auxiliary operator and its boundedness properties. We define
\[
\mathcal{R}^{(\alpha,\beta)} f(x)=\sum_{j=0}^\infty \frac{e_j^{(\alpha,\beta)}(f)}{j+m+1}p_{j}^{(\alpha,\beta)}(x).
\]
\begin{lem}\label{lem:muck}
Let $\alpha\ge\beta>-1$, $j\in \mathbb{N}\setminus \{0\}$, and
\[
\frac{4(\alpha+j+1)}{2(\alpha+j)+3}<p<\frac{4(\alpha+j+1)}{2(\alpha+j)+1}.
\]
Then,
	\begin{equation*}\label{muk}
	\|\mathcal{R}^{(\alpha+j,\beta+j)}f\|_{L^p_{\alpha+j,\beta+j}}\le C \|f\|_{L^p_{\alpha+j,\beta+j}}.
	\end{equation*}
\end{lem}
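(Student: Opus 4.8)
The plan is to recognize $\mathcal{R}^{(\alpha+j,\beta+j)}$ as a Jacobi multiplier operator whose symbol $\nu\mapsto(\nu+m+1)^{-1}$ has summable variation, so that one could appeal to a Muckenhoupt-type multiplier theorem for Jacobi expansions; it is just as quick, however, to reduce the estimate directly to the uniform boundedness of the Jacobi partial sum operators $\mathbb{S}_N^{(\alpha+j,\beta+j)}$ through summation by parts, which is what I would do. Throughout put $a=\alpha+j$ and $b=\beta+j$; since $j\ge 1$ and $\alpha\ge\beta>-1$ we have $a\ge b>0$, in particular $a,b\ge-1/2$, so Pollard's theorem \eqref{eq:range-Sn} is available for the pair $(a,b)$.

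First I would record the Abel summation identity. For $f\in L^p_{a,b}$ and $N\ge 0$, write $c_\nu=(\nu+m+1)^{-1}$ and let $\mathcal{R}_N^{(a,b)}f=\sum_{\nu=0}^{N}c_\nu e_\nu^{(a,b)}(f)p_\nu^{(a,b)}$ be the $N$-th partial sum of $\mathcal{R}^{(a,b)}f$. Summation by parts gives
\[
\mathcal{R}_N^{(a,b)}f=c_N\,\mathbb{S}_N^{(a,b)}f+\sum_{\nu=0}^{N-1}(c_\nu-c_{\nu+1})\,\mathbb{S}_\nu^{(a,b)}f,\qquad c_\nu-c_{\nu+1}=\frac{1}{(\nu+m+1)(\nu+m+2)}.
\]
Taking $L^p_{a,b}$ norms, using the triangle inequality, invoking the uniform bound $\|\mathbb{S}_\nu^{(a,b)}f\|_{L^p_{a,b}}\le C\|f\|_{L^p_{a,b}}$ (valid for every $\nu$ because $p$ lies in the Pollard range), and summing the telescoping series $\sum_{\nu\ge0}(c_\nu-c_{\nu+1})=c_0$, one obtains $\|\mathcal{R}_N^{(a,b)}f\|_{L^p_{a,b}}\le 2c_0 C\|f\|_{L^p_{a,b}}$ with a constant independent of $N$. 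Applying the same manipulation to a tail $\sum_{\nu=M+1}^{N}$ yields $\|\mathcal{R}_N^{(a,b)}f-\mathcal{R}_M^{(a,b)}f\|_{L^p_{a,b}}\le 2c_{M+1}C\|f\|_{L^p_{a,b}}\to 0$, so $\{\mathcal{R}_N^{(a,b)}f\}_N$ is Cauchy in $L^p_{a,b}$; hence $\mathcal{R}^{(a,b)}f$ is a well-defined element of $L^p_{a,b}$ and the desired inequality follows on letting $N\to\infty$.

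It then remains to check that the hypothesis of Lemma \ref{lem:muck} is precisely the Pollard range \eqref{eq:range-Sn} for $(a,b)=(\alpha+j,\beta+j)$. Since $t\mapsto\frac{4(t+1)}{2t+3}=2-\frac{2}{2t+3}$ is increasing and $t\mapsto\frac{4(t+1)}{2t+1}=2+\frac{2}{2t+1}$ is decreasing on $(-1/2,\infty)$, and $\alpha+j\ge\beta+j$, the maximum and the minimum in \eqref{eq:range-Sn} are both attained at $\alpha+j$; thus \eqref{eq:range-Sn} reduces to $\frac{4(\alpha+j+1)}{2(\alpha+j)+3}<p<\frac{4(\alpha+j+1)}{2(\alpha+j)+1}$, exactly the assumption of the lemma, under which the partial sum bound used above is legitimate.

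No deep obstacle is expected. The only point that requires a little care is verifying that $\mathcal{R}^{(\alpha+j,\beta+j)}f$ is genuinely an element of $L^p_{\alpha+j,\beta+j}$ for an arbitrary $f$ in that space, not merely for polynomials; and this is exactly what the Cauchy estimate above provides, resting on nothing beyond Pollard's theorem and the telescoping of $\sum(c_\nu-c_{\nu+1})$.
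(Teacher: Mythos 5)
Your argument is correct, but it follows a genuinely different route from the paper. The paper disposes of this lemma in one line: it observes that $\mathcal{R}^{(\alpha+j,\beta+j)}$ is a Jacobi multiplier operator with multiplier sequence $1/(\nu+m+1)$, which belongs to the class $M(1,1)$ of \cite[Theorem 1.10]{muckenhoupt}, and cites that multiplier theorem. You instead give a self-contained proof: Abel summation writes the partial sums of $\mathcal{R}^{(a,b)}f$ as $c_N\mathbb{S}_N^{(a,b)}f+\sum_{\nu<N}(c_\nu-c_{\nu+1})\mathbb{S}_\nu^{(a,b)}f$ with $c_\nu-c_{\nu+1}>0$ telescoping to $c_0$, so everything reduces to Pollard's uniform bound for $\mathbb{S}_\nu^{(a,b)}$, which is legitimate since $a=\alpha+j\ge b=\beta+j>0\ge-1/2$, and your monotonicity check correctly identifies the hypothesis of the lemma with the Pollard range \eqref{eq:range-Sn} for $(\alpha+j,\beta+j)$ when $\alpha\ge\beta$. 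What your approach buys is elementarity (only Pollard's theorem plus the bounded variation of the coefficient sequence) and, as a bonus, the $L^p$-convergence of the defining series for arbitrary $f\in L^p_{\alpha+j,\beta+j}$ via the Cauchy tail estimate, a point the paper leaves implicit (in its application $\mathcal{R}$ only acts on polynomials, so convergence is trivial there). What the paper's citation buys is brevity and the strength of Muckenhoupt's general multiplier framework, which would also cover weighted variants and, since the multiplier decays, potentially larger ranges of $p$ than needed here; for the lemma as stated the two arguments give exactly the same conclusion.
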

This lemma is a particular case of \cite[Theorem 1.10]{muckenhoupt} because the multiplier $1/(j+m+1)$ belongs to the class $M(1,1)$ there defined.

Now, it is clear that
\[
\mathcal{P}_1f^{(m)}(x)=\mathcal{R}^{(\alpha+m,\beta+m)}(\mathbb{S}_{n-m}^{(\alpha+m,\beta+m)}f^{(m)})(x)
\]
and the estimate
\[
\|\mathcal{P}_1f^{(m)}\|_{L^p_{\alpha+m,\beta+m}}\le C \|f^{(m)}\|_{L^p_{\alpha+m,\beta+m}}
\]
is an immediate consequence of the previous lemma and the boundedness of the partial sum operator for the Jacobi expansions.

\subsection{Necessary conditions}
If \eqref{eq:acot-main} holds, it is clear that
\begin{equation}
\label{eq:dual}
|c_n^{(\alpha,\beta,m)}(f)|\|q_{n}^{(\alpha,\beta,m)}\|_{W^{p,m}_{\alpha,\beta}}
=\|S_n^{(\alpha,\beta,m)}f-S_{n-1}^{(\alpha,\beta,m)}f\|_{W^{p,m}_{\alpha,\beta}}\le C\|f\|_{W_{\alpha,\beta}^{p,m}}
\end{equation}
By \cite[Theorem 4.3]{MQR}, each functional in $T\in (W^{p,m}_{\alpha,\beta})'$, with $1\le p<\infty$, can be written as
\[
T(f)=\sum_{k=0}^m \int_{-1}^{1}f^{(k)}(x)v_k(x)\, d\mu_{\alpha+k,\beta+k},
\]
where $v=(v_0,\dots,v_m)$ belongs to the space $\prod_{k=0}^{m}L^q_{\alpha+k,\beta+k}$ equipped with the norm
\[
\|v\|^p_{\prod_{k=0}^{m}L^q_{\alpha+k,\beta+k}}=\sum_{k=0}^{m}\|v_k\|^q_{L^q_{\alpha+k,\beta+k}}.
\]
Moreover, $\|T\|=\|v\|_{\prod_{k=0}^{m}L^q_{\alpha+k,\beta+k}}$ and the function $v$ is unique for $1<p<\infty$. From this fact, it is clear that the norm as operator of $c_n^{(\alpha,\beta,m)}(f)$ is given by $\|q_{n}^{(\alpha,\beta,m)}\|_{W^{q,m}_{\alpha,\beta}}$ and, by \eqref{eq:dual}, the inequality
\begin{equation}
\label{eq:dual2}
\|q_{n}^{(\alpha,\beta,m)}\|_{W^{p,m}_{\alpha,\beta}}\|q_{n}^{(\alpha,\beta,m)}\|_{W^{q,m}_{\alpha,\beta}}\le C
\end{equation}
holds with a constant independent of $n$ when \eqref{eq:acot-main} is verified.

Now, taking into account that
\[
\|q_{n}^{(\alpha,\beta,m)}\|_{W^{p,m}_{\alpha,\beta}}^p=\sum_{k=0}^{m}\sqrt{\frac{r_{n,k}^{(\alpha,\beta)}}{s^{(\alpha,\beta)}_{n,m}}}
\|p_{n-k}^{(\alpha+k,\beta+k)}\|_{L^p_{\alpha+k,\beta+k}}
\]
and the inequality $\|p_{n-k}^{(\alpha+k,\beta+k)}\|_{L^p_{\alpha+k,\beta+k}}\le C \|p_{n-k}^{(\alpha+m,\beta+m)}\|_{L^p_{\alpha+m,\beta+m}}$, for $0\le k \le m$, we can deduce that
\[
\|q_{n}^{(\alpha,\beta,m)}\|_{W^{p,m}_{\alpha,\beta}}\simeq \|p_{n-k}^{(\alpha+m,\beta+m)}\|_{L^p_{\alpha+m,\beta+m}}.
\]
Then, \eqref{eq:dual2} and \eqref{eq:Jacobi-Lp} imply \eqref{eq:alpha-cond}.
\section{Proof of the Proposition \ref{propo-T}}
It is easy to check that
\[
T_{\alpha,\beta}g(x)=\int_{-1}^{1}g(y)L(x,y)\,d\mu_{\alpha-1,\beta-1}(y),
\]
with
\[
L(x,y)=\sum_{j=1}^\infty\frac{p_{j-1}^{(\alpha,\beta)}(x)p_j^{(\alpha-1,\beta-1)}(y)}{j+m}
\]
Then, with the change of variable $x=\cos\theta$ and $y=\cos \omega$, the inequality \eqref{eq:acot-T} is equivalent to
\begin{equation}
\label{eq:acot-TT}
\int_{0}^{\pi}|\overline{T}_{\alpha,\beta}G(\theta)|^pW_{\alpha,\beta}(\theta)\, d\theta\\\le C
\int_{0}^{\pi}|G(\theta)|^pW_{\alpha-1,\beta-1}(\theta)\, d\theta,
\end{equation}
where $W_{\alpha,\beta}(\theta)=(\sin \theta/2)^{(\alpha+1/2)(2-p)}(\cos \theta/2)^{(\beta+1/2)(2-p)}$,
\[
\overline{T}_{\alpha,\beta}G(\theta)=\int_{-1}^{1}G(\omega)\mathcal{L}(\theta,\omega)\,d\omega,
\]
with
\[
\mathcal{L}(\theta,\omega)=\sum_{j=1}^\infty\frac{\phi_{j-1}^{(\alpha,\beta)}(\theta)\phi_{j}^{(\alpha-1,\beta-1)}(\omega)}{j+m}
\]
and
\[
\phi_k^{(a,b)}(\theta)=2^{(\alpha+\beta+1)/2}(\sin \theta/2)^{\alpha+1/2}(\cos\theta/2)^{\beta+1/2}p_k^{(a,b)}(\cos\theta).
\]

Now, for any integer $d$ and $0<r<1$, we consider the auxiliary kernel
\[
\mathcal{L}^{(\alpha,\beta),(\alpha-1,\beta-1)}_{r,d,m}(\theta,\omega)=\sum_{j=\max\{0,-d\}}^\infty r^j \frac{\phi_{j+d}^{(\alpha,\beta)}(\theta)\phi_{j}^{(\alpha-1,\beta-1)}(\omega)}{j+m+1}.
\]
\begin{lem}
For $\alpha,\beta>0$, $0<r<1$, and $0<\theta,\omega<\pi$, it is verified that
\begin{equation}
\label{eq:bound-L}
|\mathcal{L}_{r,-1,m}^{(\alpha,\beta),(\alpha-1,\beta-1)}(\theta,\omega)|\le C \begin{cases}
\dfrac{\omega^{\alpha-1/2}(\pi-\theta)^{\beta+1/2}}{\theta^{\alpha-1/2}(\pi-\omega)^{\beta+1/2}}, & 0<\omega\le M(\theta),\\[6pt]
\log \bigg(\dfrac{2\theta}{|\theta-\omega|}\bigg), & M(\theta)<\omega<m(\theta),\\[6pt]
\dfrac{\theta^{\alpha+1/2}(\pi-\omega)^{\beta-1/2}}{\omega^{\alpha+1/2}(\pi-\theta)^{\beta-1/2}}, & m(\theta)\le \omega <\pi,
\end{cases}
\end{equation}
with
\[
M(\theta)=\max\left\{\frac{\theta}{2},\frac{3\theta-\pi}{2}\right\}\qquad \text{ and }\qquad m(\theta)=\min\left\{\frac{3\theta}{2},\frac{\theta+\pi}{2}\right\}
\]
\end{lem}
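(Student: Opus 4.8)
\medskip
\noindent The plan is to freeze the Abel parameter $r$, substitute the classical asymptotics of the Jacobi functions $\phi_n^{(a,b)}$, and reduce \eqref{eq:bound-L} to a short list of estimates for scalar trigonometric sums. The two inputs I would use (see \cite{Szego}) are: the bulk expansion, valid for $c/n\le\theta\le\pi-c/n$,
\[
\phi_n^{(a,b)}(\theta)=\sqrt{\tfrac2\pi}\,\cos\!\Big(\big(n+\tfrac{a+b+1}{2}\big)\theta-\big(\tfrac a2+\tfrac14\big)\pi\Big)+E_n^{(a,b)}(\theta),
\]
where $E_n^{(a,b)}(\theta)$ and all of its finite differences in $n$ are $O\big((n\theta)^{-1}n^{-1}\big)$; and the endpoint bounds $|\phi_n^{(a,b)}(\theta)|\le C(n\theta)^{a+1/2}$ for $0<\theta\le1/n$, $|\phi_n^{(a,b)}(\theta)|\le C$ for $1/n\le\theta\le\pi/2$, together with their mirror images near $\theta=\pi$ (with $\theta\mapsto\pi-\theta$, $a\mapsto b$).

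\medskip
Inserting the bulk formula into $\mathcal{L}_{r,-1,m}^{(\alpha,\beta),(\alpha-1,\beta-1)}(\theta,\omega)=\sum_{j\ge1}r^j\frac{\phi_{j-1}^{(\alpha,\beta)}(\theta)\phi_{j}^{(\alpha-1,\beta-1)}(\omega)}{j+m+1}$ and using $2\cos A\cos B=\cos(A-B)+\cos(A+B)$, the contribution of the two main terms — over the range of $j$ in which both $\theta$ and $\omega$ lie in their bulk zones — becomes a finite combination, up to bounded factors, of scalar sums $\sum_{j\ge A}\frac{r^j}{j+m+1}e^{ij(\theta\mp\omega)}$; the interactions in which at least one error term $E$ occurs converge absolutely after a single summation by parts. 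Writing $\tfrac1{j+m+1}=\int_0^1 s^{j+m}\,ds$ and using $\big|\sum_{j=A}^{B}(re^{i\psi})^{j}\big|\le C\,|1-re^{i\psi}|^{-1}\le C\max\{1-r,|\psi|\}^{-1}$, one obtains, uniformly in $0<r<1$ and $A\ge1$, that $\big|\sum_{j\ge A}\frac{r^j}{j+m+1}e^{ij\psi}\big|\le C\log\frac{2}{A|\psi|}$ when $A|\psi|\le1$ and $\big|\sum_{j\ge A}\frac{r^j}{j+m+1}e^{ij\psi}\big|\le C$ when $A|\psi|\ge1$, with additional decay available on iterating the summation by parts against smooth amplitudes.

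\medskip
The cut-offs $M(\theta)$ and $m(\theta)$ are chosen precisely so that the trichotomy of \eqref{eq:bound-L} matches the behaviour of these sums. On the diagonal band $M(\theta)<\omega<m(\theta)$ one has $\omega\simeq\theta$ while $|\theta-\omega|$ may be small, but both $\theta+\omega$ and $2\pi-\theta-\omega$ stay above the relevant scale; moreover the effective part of the $j$-sum begins at the Bessel cut-off $A\simeq1/\theta$ (for $j<A$ both points are in their endpoint zones and contribute only $O(1)$ by the endpoint bounds), so the $(\theta-\omega)$-sum gives $\log\frac{2}{A|\theta-\omega|}\simeq\log\frac{2\theta}{|\theta-\omega|}$ and the $(\theta+\omega)$-sum is $O(1)$: this is the middle line. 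On $0<\omega\le M(\theta)$ one has $\omega<\theta$ and $|\theta-\omega|\gtrsim\min\{\theta,\pi-\theta\}$, so the range $j\le1/\theta$ is summed directly, $\sum_{j\le1/\theta}\frac{(j\theta)^{\alpha+1/2}(j\omega)^{\alpha-1/2}}{j}\simeq\theta^{1/2-\alpha}\omega^{\alpha-1/2}$, which is the first line once the endpoint prefactors $\cos(\theta/2),\cos(\omega/2)$ of the two factors are restored, while for $j>1/\theta$ one uses the bulk oscillation of $\phi_{j-1}^{(\alpha,\beta)}(\theta)$, summing the geometric part $\sum(re^{i\theta})^j$ and summing by parts against the smooth amplitude that carries the size of $\phi_{j}^{(\alpha-1,\beta-1)}(\omega)$, iterated roughly $\lceil\alpha+\tfrac12\rceil$ times (each difference of the amplitude gaining a factor $1/j$): again the first line. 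The last line, on $m(\theta)\le\omega<\pi$, is the mirror of this with the endpoint $\pi$ and the parameter $\beta$ in place of $0$ and $\alpha$; and the various corner subcases concealed in $M(\theta)=(3\theta-\pi)/2$, $m(\theta)=(\theta+\pi)/2$ are reached from the previous ones by the same techniques, using the flip $\theta\mapsto\pi-\theta$, $\omega\mapsto\pi-\omega$ where convenient.

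\medskip
The genuine difficulty lies in the off-diagonal estimates when $\alpha$ or $\beta$ is large: a single summation by parts yields only $|\theta-\omega|^{-1}$-type decay, too weak against the high power of the quotient demanded by \eqref{eq:bound-L}, so the summation by parts must be iterated, which forces one to use the full asymptotic expansion of $\phi_n^{(a,b)}$ with controlled discrete $n$-derivatives of the remainder; keeping track of all the resulting multiple sums — over the near-$0$, bulk and near-$\pi$ $j$-ranges of each variable, the two corner subcases and the main-versus-error splitting — and checking that each piece fits under the exact exponents $\alpha\pm\tfrac12$, $\beta\pm\tfrac12$ is where almost all of the technical work goes. (An alternative that trims part of this combinatorics, at the cost of new weight bookkeeping, is to use the ladder relation $p_{j-1}^{(\alpha,\beta)}=(\lambda_j^{(\alpha-1,\beta-1)})^{-1/2}\tfrac{d}{dx}p_j^{(\alpha-1,\beta-1)}$ coming from \eqref{eq:derivative}, which rewrites $\mathcal{L}_{r,-1,m}$ as a $\theta$-derivative of an Abel mean of the equal-parameter bilinear sum $\sum_j\frac{r^j}{(j+m+1)\sqrt{\lambda_j^{(\alpha-1,\beta-1)}}}\phi_j^{(\alpha-1,\beta-1)}(\theta)\phi_j^{(\alpha-1,\beta-1)}(\omega)$, whose pointwise size and that of its $\theta$-derivative are classical.)
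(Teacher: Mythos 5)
Your plan is a from-scratch re-derivation of what the paper simply quotes. The paper disposes of the first and third lines of \eqref{eq:bound-L} by reducing them, via the two kernel symmetries
\[
\mathcal{L}_{r,-1,m}^{(\alpha,\beta),(\alpha-1,\beta-1)}(\theta,\omega)=-\mathcal{L}_{r,-1,m}^{(\beta,\alpha),(\beta-1,\alpha-1)}(\pi-\theta,\pi-\omega)
=r\,\mathcal{L}_{r,1,m+1}^{(\alpha-1,\beta-1),(\alpha,\beta)}(\omega,\theta),
\]
to Theorems 5.1 and 7.1 of Muckenhoupt's transplantation memoir \cite{muckenhoupt} (with the admissible multiplier sequences $g(j)=1/(j+m)$, resp.\ $1/(j+m+1)$), and obtains the logarithmic middle line by repeating the analysis of the kernel $L_r^{1,-1}$ in \cite{CNS}. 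The architecture you propose --- endpoint versus bulk $j$-ranges, product-to-sum, Abel-summed geometric series, iterated summation by parts against slowly varying amplitudes --- is exactly the machinery \emph{inside} those cited results, not an alternative to it; your direct computation of the $j\le 1/\theta$ block and your identification of the diagonal logarithm are consistent with what that machinery yields.

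The gap is that the one ingredient carrying all the difficulty is asserted rather than proved. You posit a bulk expansion whose remainder $E_n^{(a,b)}(\theta)$ ``and all of its finite differences in $n$'' are $O\big((n\theta)^{-1}n^{-1}\big)$. As stated this is not the classical estimate: Szeg\"{o}'s first-order formula controls the remainder only by $O\big((n\sin\theta)^{-1}\big)$, and it says nothing about iterated $n$-differences. With the correct bound, your step ``the interactions in which at least one error term occurs converge absolutely'' produces only an $O(1)$ contribution; that is harmless for the middle line (whose right-hand side is bounded below on the diagonal band) but fatal for the first and third lines, whose targets $(\omega/\theta)^{\alpha-1/2}$, $(\theta/\omega)^{\alpha+1/2}$ (times the factors at $\pi$) can be arbitrarily small. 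You concede at the end that one must instead use a full higher-order asymptotic expansion with controlled discrete $n$-derivatives of the remainder and iterate the summation by parts roughly $\lceil\alpha+\tfrac12\rceil$ times, in every combination of $j$-ranges and corner subcases --- but that is precisely the content of the kernel estimates in \cite{muckenhoupt}, and it is the part you have not carried out. A complete write-up must either cite \cite{muckenhoupt} (and \cite{CNS} for the logarithmic regime), as the paper does, or actually supply the expansion with oscillatory terms $n^{-k}\cos(N\theta+\gamma_k)$ and a remainder whose iterated $n$-differences are controlled, before running the summation-by-parts cascade.
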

\begin{proof}
To prove the estimate in the first line of \eqref{eq:bound-L} note that when $0<\theta,\omega<3\pi/4$, the right-hand side is equivalent to $\omega^{\alpha-1/2}\theta^{1/2-\alpha}$ and this can be deduced from \cite[Theorem 7.1]{muckenhoupt} (in fact, we have to consider in that theorem $d=-1$, $s=1$, and $g(j)=1/(j+m)$). When $\pi/4<\theta,\omega<\pi$, the required estimate is comparable with $(\pi-\theta)^{\beta+1/2}(\pi-\omega)^{-\beta-1/2}$, and the required bound is obtained by using the identities (remember that $P_n^{(a,b)}(-x)=(-1)^n P_n^{(b,a)}(x)$)
\[
\mathcal{L}_{r,-1,m}^{(\alpha,\beta),(\alpha-1,\beta-1)}(\theta,\omega)=-\mathcal{L}_{r,-1,m}^{(\beta,\alpha),(\beta-1,\alpha-1)}(\pi-\theta,\pi-\omega)
=r\mathcal{L}_{r,1,m+1}^{(\alpha-1,\beta-1),(\alpha,\beta)}(\omega,\theta)
\]
and again \cite[Theorem 7.1]{muckenhoupt} (in this case with $d=1$, $s=1$, and $g(j)=1/(j+m+1)$). Finally, for $3\pi/4\le \theta<\pi$ and $0<\omega\le \pi/4$ the bound is equivalent to $\omega^{\alpha-1/2}(\pi-\theta)^{\beta+1/2}$ and this one is contained in \cite[Theorem 5.1]{muckenhoupt}.

The estimate in the third line of \eqref{eq:bound-L} is obtained in similar manner because is the dual of the bound in the first line.

To obtain the bound in the second line of \eqref{eq:bound-L} we proceed as in the analysis of the kernel $L_r^{1,-1}$ in \cite[Proposition 3.2, pp. 365--366]{CNS}.
\end{proof}

Following the ideas in the proof of \cite[Proposition 3.3]{CNS}, it is possible to prove that $\lim_{r\to 1^{-}}\mathcal{L}_{r,-1,m}^{(\alpha,\beta),(\alpha-1,\beta-1)}(\theta,\omega)$ exists and
\[
\mathcal{L}(\theta,\omega)=\lim_{r\to 1^{-}}\mathcal{L}_{r,-1,m}^{(\alpha,\beta),(\alpha-1,\beta-1)}(\theta,\omega).
\]
Moreover, $|\mathcal{L}(\theta,\omega)|$ is bounded by the right-hand side of \eqref{eq:bound-L}.
Then the operator $\overline{T}_{\alpha,\beta}G(\theta)$ can be controlled by the sum of
\[
R_1G(\theta)=\frac{(\pi-\theta)^{\beta+1/2}}{\theta^{\alpha-1/2}}\int_{0}^{\theta}\frac{\omega^{\alpha-1/2}}{(\pi-\omega)^{\beta+1/2}}|G(\omega)|\, d\omega,
\]
\[
R_2G(\theta)=\int_{M(\theta)}^{m(\theta)}\log \bigg(\dfrac{2\theta}{|\theta-\omega|}\bigg)|G(\omega)|\, d\omega,
\]
and
\[
R_3G(\theta)=\frac{\theta^{\alpha+1/2}}{(\pi-\theta)^{\beta-1/2}}\int_{\theta}^{\pi} \frac{(\pi-\omega)^{\beta-1/2}}{\omega^{\alpha+1/2}}|G(\omega)|\, d\omega.
\]

It is known, it is a consequence of \cite[Theorem A]{andersen}, that the inequality
\[
\int_{0}^{\pi}\left|U(\theta)\int_{0}^{\theta}h(\omega)\,d\omega\, \right|^p\, d\theta\le C \int_{0}^{\pi}\left|V(\theta)h(\theta)\right|^p\, d\theta
\]
holds if and only if
\begin{equation}
\label{eq:con-Hardy}
\sup_{0<r<\pi}\left(\int_{r}^{\pi}U^p(\theta)\, d\theta\right)^{1/p}\left(\int_{0}^{r}V^{-p'}(\theta)\, d\omega\right)^{1/p'}<\infty.
\end{equation}
Moreover, from \cite[Theorem B]{andersen}, we have
\[
\int_{0}^{\pi}\left|U(\theta)\int_{\theta}^{\pi}h(\omega)\,d\omega\, \right|^p\, d\theta\le C \int_{0}^{\pi}\left|V(\theta)h(\theta)\right|^p\, d\theta
\]
 if and only if
\begin{equation}
\label{eq:con-Hardy-ad}
\sup_{0<r<\pi}\left(\int_{0}^{r}U^p(\theta)\, d\theta\right)^{1/p}\left(\int_{r}^{\pi}V^{-p'}(\theta)\, d\omega\right)^{1/p'}<\infty.
\end{equation}

In this way the boundedness
\[
\int_{0}^{\pi}|R_1G(\theta)|^p W_{\alpha,\beta}(\theta)\le C \int_{0}^{\pi}|G(\theta)|^p W_{\alpha-1,\beta-1}(\theta)\, d\theta
\]
will follow checking the condition \eqref{eq:con-Hardy} for the weights
\[
U(\theta)=W_{\alpha,\beta}^{1/p}(\theta)\frac{(\pi-\theta)^{\beta+1/2}}{\theta^{\alpha-1/2}}
\qquad \text{ and }\qquad
V(\theta)=W_{\alpha-1,\beta-1}^{1/p}(\theta)\frac{(\pi-\theta)^{\beta+1/2}}{\theta^{\alpha-1/2}}.
\]
For these weights the supremum in \eqref{eq:con-Hardy} is equivalent to
\[
\sup_{0<r<\pi}\left(\int_{r}^{\pi}\theta^{2\alpha(1-p)+1}(\pi-\theta)^{2\beta+1}\, d\theta\right)^{1/p}\left(\int_{0}^{r}\theta^{2\alpha-1}(\pi-\theta)^{2\beta(1-p')-1}\, d\theta\right)^{1/p'}
\]
and this quantity is finite for $1<p<\infty$ and $\alpha,\beta>0$. To obtain the inequality
\[
\int_{0}^{\pi}|R_3G(\theta)|^p W_{\alpha,\beta}(\theta)\le C \int_{0}^{\pi}|G(\theta)|^p W_{\alpha-1,\beta-1}(\theta)\, d\theta
\]
we proceed in the same way but checking the condition \eqref{eq:con-Hardy-ad} for the appropriate weights.

To complete the proof of the inequality \eqref{eq:acot-TT} we have to check that
\[
\int_{0}^{\pi}|R_2G(\theta)|^p W_{\alpha,\beta}(\theta)\le C \int_{0}^{\pi}|G(\theta)|^p W_{\alpha-1,\beta-1}(\theta)\, d\theta.
\]
With some elementary manipulations, the previous inequality follows from
\[
\int_{0}^{\pi/2}\left|\int_{\theta/2}^{3\theta/2}\log \left(\frac{2\theta}{|\theta-\omega|}\right)h_1(\omega)\,d\omega\right|^p\theta^{(2-p)}\, d\theta\le \int_{0}^{\pi/2}|h_1(\theta)|^p\,d\theta
\]
and
\[
\int_{\pi/2}^{\pi}\left|\int_{(3\theta-\pi)/2}^{(\theta+\pi)/2}\log \left(\frac{2\theta}{|\theta-\omega|}\right)h_2(\omega)\,d\omega\right|^p(\pi-\theta)^{(2-p)}\, d\theta\le \int_{\pi/2}^{\pi}|h_2(\theta)|^p\,d\theta
\]
and both of them can be deduced applying H\"older inequality. Now, the proof of the proposition is finished.

\end{document}